\theoremstyle{plain}
\newtheorem{theorem}{Theorem}[section]
\newtheorem{corollary}[theorem]{Corollary}
\newtheorem{lemma}[theorem]{Lemma}
\newtheorem{proposition}[theorem]{Proposition}
\theoremstyle{definition}
\newtheorem{definition}[theorem]{Definition}
\newtheorem{remark}[theorem]{Remark}
\newtheorem{example}[theorem]{Example}
\def\drop#1{} % uncomment the next line and comment out this line to make dropped material visible, using strikethrough font
\begin{document}

\title{A Sufficient Condition for a  Quandle to be Latin}

\author[Lages]{Ant\'onio Lages}
\email[Lages]{antonio.lages@tecnico.ulisboa.pt}

\author[Lopes]{Pedro Lopes}
\email[Lopes]{pedro.f.lopes@tecnico.ulisboa.pt}

\author[Vojt\v{e}chovsk\'y]{Petr Vojt\v{e}chovsk\'y}
\email[Vojt\v{e}chovsk\'y]{petr.vojtechovsky@du.edu}

\address[Lages, Lopes]{Department of Mathematics, Instituto Superior T\'ecnico, Universidade de Lisboa, Av. Rovisco Pais, 1049-001 Lisbon, Portugal}

\address[Vojt\v{e}chovsk\'y]{Department of Mathematics, University of Denver, 2390 S York St, Denver, Colorado, 80208, USA}

\thanks{Ant\'onio Lages supported by FCT LisMath fellowship PD/BD/150348/2019. Pedro Lopes supported by project FCT PTDC/MAT-PUR/31089/2017, ``Higher Structures and Application''; member of CAMGSD. Petr Vojt\v{e}chovsk\'y supported by Simons Foundation Colaboration Grant for Mathematicians, award no.~855097. Lages and Lopes also supported  by FCT/Portugal through CAMGSD, IST-ID, projects UIDB/04459/2020 and UIDP/04459/2020.}

\begin{abstract}
A quandle is an algebraic structure satisfying three axioms: idempotency, right-invertibility and right self-distributivity. In quandles, right translations are permutations. The profile of a quandle is the list of cycle structures, one per right translation in the quandle. In this note we prove that  if, for each cycle structure in the profile of a quandle, no two cycle lengths are equal, then the quandle is latin---this is the sufficient condition mentioned in the title.
\end{abstract}

\keywords{quandle; permutations; cycle structure; profile; connected quandle; latin quandle}

\subjclass[2020]{20N99}

\maketitle

\section{Introduction}\label{sec:intro}

A quandle is an algebraic structure satisfying three axioms: idempotency, right-invertibility, and right self-distributivity (\cite{Joyce}, \cite{Matveev}). These three axioms constitute an algebraic counterpart to the Reidemeister moves of knot theory (\cite{Kauffman}) and, thus, quandles give rise to efficient ways of distinguishing knots (e.g., via counting colorings, \cite{Dionisio_Lopes}, \cite{Bojarczuk_Lopes}, \cite{FLS}). The algebraic and combinatorial theory of quandles is of interest in its own right.

The second quandle axiom implies that right translations of quandles are permutations. The profile of a quandle (\cite{Lopes_Roseman}) is the list of cycle structures, one cycle structure per right translation in the quandle. In a connected quandle, any two right translations have the same cycle structure and we thus identify its profile with this cycle structure.

In this note we prove that if no cycle structure in the profile of a finite quandle has repeats  i.e., if, for each cycle structure in the profile, no two cycle lengths are equal, then the quandle is latin (Theorem \ref{thm:main})---this is the sufficient condition mentioned in the title. More generally, if a finite quandle contains a right translation, $R_{i_0}$, with cycles of distinct lengths and if all right translations have a unique fixed point, then the corresponding left translation, $L_{i_0}$, is also a permutation and its cycles refine the cycles of $R_{i_0}$ (Proposition \ref{prop:RL}).

The organization of this article is as follows. In Section \ref{sec:background} the background material is introduced, including definitions and facts from the literature. The main results are stated and proved in Section \ref{sec:proofs}. In Section \ref{sctn:further} we collect a few questions for further research. All connected quandles of orders up to $47$ are included in the \texttt{GAP} \cite{GAP} package \texttt{RIG} \cite{Vendramin}. Let us call such connected quandles \emph{small connected quandles}. The appendix contains:
\begin{itemize}
\item the list of small connected quandles whose profiles do not contain repeats (these are among the quandles covered by Theorem \ref{thm:main}),
\item the list of profiles with repeats for small latin quandles (these are among the profiles not covered by Theorem \ref{thm:main} for which the conclusion of the theorem holds),
\item the list of profiles with a single fixed point for small non-latin connected quandles (these are among the profiles that are not immediately detected as not belonging to latin quandles, cf. Lemma \ref{lem:almost}).
\end{itemize}

\section{Background material and results}\label{sec:background}

The algebraic structure known as \emph{quandle}, introduced independently in \cite{Joyce} and \cite{Matveev}, is defined as follows.

\begin{definition}\label{def:quandle}
Let $X$ be a set equipped with a binary operation denoted by $*$. The pair $(X,*)$ is said to be a $\emph{quandle}$ if, for each $i, j, k\in X$,
\begin{enumerate}
\item[(i)] $i*i=i$ (idempotency);
\item[(ii)] $\exists ! x\in X: x*j=i$ (right-invertibility);
\item[(iii)] $(i*j)*k=(i*k)*(j*k)$ (right self-distributivity).
\end{enumerate}
\end{definition}

To each finite quandle $(X,\ast)$ of order $n$ we associate a \emph{quandle table}, which is the $n\times n$ table whose element in row $i$ and column $j$ is $i\ast j$, for all $i,j\in X$. We add an extra $0$-th column (where we display the $i$'s) and an extra $0$-th row (where we display the $j$'s) to quandle tables to improve legibility.

\begin{example}
Table \ref{table:1} is an example of a quandle table for a quandle of order $6$. It is the quandle table for $Q_{6,2}$, see \cite{Lages_Lopes}.
\end{example}

\begin{table}[htb]
\caption{Quandle table for $Q_{6,2}$.}\label{table:1}
\begin{center}
\begin{tabular}{c|c c c c c c}
 $\ast$ & 1 & 2 & 3 & 4 & 5 & 6\\
 \hline
 1 & 1 & 5 & 1 & 6 & 4 & 2\\
 2 & 6 & 2 & 5 & 2 & 1 & 3\\
 3 & 3 & 6 & 3 & 5 & 2 & 4\\
 4 & 5 & 4 & 6 & 4 & 3 & 1\\
 5 & 2 & 3 & 4 & 1 & 5 & 5\\
 6 & 4 & 1 & 2 & 3 & 6 & 6\\
\end{tabular}
\end{center}
\end{table}

For an algebraic structure $(X,*)$ and for $i\in X$, we let $R_i: X\rightarrow X, j\mapsto j*i$ be the \emph{right translation} by $i$ in $(X,*)$ and $L_i: X\rightarrow X, j\mapsto i*j$ be the \emph{left translation} by $i$ in $(X,*)$. For example, the right translation by $1$ in $Q_{6,2}$ is $R_1=(1)(3)(2\,\,6\,\,4\,\,5)$. Note that axioms (ii) and (iii) in Definition \ref{def:quandle} guarantee that each right translation in a quandle is an automorphism of that quandle.

The group $\langle R_i:i\in X\rangle$ generated by the right translations of a quandle $(X,*)$ is said to be the \emph{right multiplication group}. A quandle $(X,*)$ is \emph{connected} if its right multiplication group acts transitively on $X$. A quandle $(X,*)$ is \emph{latin} if all its left translations are permutations of $X$. For instance, $Q_{6,2}$ is connected but not latin.

For a permutation $f$ on a finite set $X$, the \emph{cycle structure} of $f$ is the formal sequence $(1^{c_1}, 2^{c_2}, 3^{c_3},\dots)$ such that $c_m$ is the number of $m$-cycles in a decomposition of $f$ into disjoint cycles. For convenience, we omit entries with $c_i=0$ and we drop $c_i$ whenever $c_i=1$. For example, the right translation by $1$ in $Q_{6,2}$ has cycle structure $(1^2, 4)$.

The \emph{profile} of a quandle is the \drop{sorted }list of the cycle structures of its right translations.

\begin{lemma}\label{lem:automorphism}
Let $(X,*)$ be an algebraic structure and $f$ an automorphism of $(X,*)$. Then $f R_i f^{-1}=R_{f(i)}$, for every $i\in X$.
\end{lemma}
\begin{proof}
Indeed, $fR_i f^{-1}(j)=f(f^{-1}(j)*i)=f(f^{-1}(j))*f(i)=j*f(i)=R_{f(i)}(j)$, for every $i,j\in X$.
\end{proof}

\begin{lemma}\label{lem:connected}
In a finite connected quandle $(X,*)$, all right translations have the same cycle structure.
\end{lemma}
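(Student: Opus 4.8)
The plan is to reduce the claim to the classical fact that two conjugate permutations of a finite set always share the same cycle structure. Concretely, I would fix an arbitrary pair $i,j\in X$ and seek a permutation $g$ of $X$ with $gR_ig^{-1}=R_j$; once such a $g$ is found, $R_i$ and $R_j$ are conjugate and hence have identical cycle structures, and since $i,j$ were arbitrary all right translations share one common cycle structure.

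To produce the conjugating element I would invoke connectivity together with Lemma \ref{lem:automorphism}. Connectivity says that the right multiplication group $\langle R_k : k\in X\rangle$ acts transitively on $X$, so there exists $g$ in this group with $g(i)=j$. The key point is that this $g$ is an automorphism of $(X,*)$: each generator $R_k$ is an automorphism (as recorded after Definition \ref{def:quandle}, this follows from axioms (ii) and (iii)), and the automorphisms of any algebraic structure are closed under composition and inversion, so every element of the right multiplication group---in particular $g$---is an automorphism. Applying Lemma \ref{lem:automorphism} to $g$ then yields $gR_ig^{-1}=R_{g(i)}=R_j$, exactly the conjugacy relation required.

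The argument has no serious obstacle; the only point that needs care is the observation that the transitivity-witnessing element $g$ is genuinely an automorphism (and not merely an abstract permutation), since Lemma \ref{lem:automorphism} applies only to automorphisms. Finiteness of $X$ is used solely to guarantee that the cycle structures in question are well-defined finite data.
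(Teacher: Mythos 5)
Your proof is correct and is precisely the argument the paper compresses into one line ("this is a straightforward consequence of Lemma \ref{lem:automorphism}"): use transitivity of the right multiplication group to find $g$ with $g(i)=j$, note $g$ is an automorphism as a composite of automorphisms, and apply Lemma \ref{lem:automorphism} to conclude $R_i$ and $R_j$ are conjugate. No further comment is needed.
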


\begin{proof}
Since each right translation in a quandle is an automorphism, this is a straightforward consequence of Lemma \ref{lem:automorphism}.
\end{proof}

We thus define the \emph{profile} $\mathrm{p}(X,*)$ of a finite connected quandle $(X,*)$ to be the cycle structure of any of its right translations. For instance, $Q_{6,2}$ has profile $\mathrm{p}(Q_{6,2})=(1^2,4)$.

\begin{lemma}\label{lem:implication}
Let $(X,*)$ be a latin quandle. Then $(X,*)$ is connected.
\end{lemma}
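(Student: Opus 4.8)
The plan is to establish connectedness directly by exhibiting, for every pair of elements, a single element of the right multiplication group that moves one to the other. Fix arbitrary $a,b\in X$. Since $(X,*)$ is latin, the left translation $L_a\colon X\to X$, $j\mapsto a*j$, is a permutation, and in particular it is surjective. Hence there exists $c\in X$ with $a*c=b$. The crucial observation is then that this equation reads $R_c(a)=a*c=b$, so the right translation $R_c$ already carries $a$ to $b$.

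From here the argument is immediate: $R_c$ belongs to the right multiplication group $\langle R_i:i\in X\rangle$, and since $a,b\in X$ were arbitrary, this group acts transitively on $X$. By the definition of connectedness, $(X,*)$ is therefore connected.

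The only point requiring care is keeping the two families of translations straight. In any quandle the right translations are automatically permutations by axiom (ii), so they always lie in (and generate) the right multiplication group; what the latin hypothesis contributes is precisely the surjectivity of the \emph{left} translations, and it is exactly this surjectivity that supplies the element $c$. I expect no genuine obstacle here: no composition of several right translations is needed, nor any finiteness assumption, since a single right translation already realizes the required transition.
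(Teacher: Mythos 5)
Your proof is correct and is essentially identical to the paper's: the paper also produces the witness $c=L_i^{-1}(j)$ so that $R_c(i)=j$, showing a single right translation suffices for transitivity. No differences worth noting.
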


\begin{proof}
Indeed, $R_{L_i^{-1}(j)}(i)=i*L_i^{-1}(j)=L_i(L_i^{-1}(j))=j$, for every $i,j\in X$.
\end{proof}

Though each latin quandle is connected, not every connected quandle is latin. We would like to know under which assumptions on $\mathrm{p}(X,*)$ a connected quandle
$(X,*)$ is latin. We start with an important observation.

\begin{lemma}\label{lem:almost}
If $(X,*)$ is a latin quandle, then each of its right translations has a unique fixed point.
\end{lemma}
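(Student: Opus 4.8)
The plan is to show that in a latin quandle each right translation $R_i$ has at least one fixed point and at most one fixed point, and then combine the two. For the existence of a fixed point, the key observation is idempotency: axiom (i) gives $i*i=i$, which reads as $R_i(i)=i$. So $i$ is always a fixed point of $R_i$, and this needs nothing beyond the quandle axioms — in particular it holds in any quandle, not just a latin one.

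The substance of the lemma is therefore \emph{uniqueness} of the fixed point, and this is where the latin hypothesis must enter. First I would set up the situation: suppose $j$ is a fixed point of $R_i$, i.e.\ $R_i(j)=j*i=j$. I want to deduce $j=i$. The natural tool is to rewrite the two equalities $j*i=j$ and $i*i=i$ so that they both present $j$ (respectively $i$) as the image of something under a common \emph{left} translation, and then invoke the injectivity of that left translation, which is exactly what ``latin'' supplies. Concretely, I expect to use right self-distributivity (axiom (iii)) to move the problem into the left-translation world: computing $(j*i)*j$ or comparing $L_j$ applied to suitable arguments should express the fixed-point condition as $L_j(a)=L_j(b)$ for elements $a,b$ that I can identify with $i$ and $j$. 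Since $(X,*)$ is latin, $L_j$ is a permutation, hence injective, forcing $a=b$ and thus $j=i$.

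Here is the computation I anticipate carrying out. From $j*i=j$ and idempotency $j*j=j$, I get $j*i=j*j$, i.e.\ $L_j(i)=L_j(j)$; injectivity of the left translation $L_j$ (which is a permutation because the quandle is latin) then yields $i=j$ immediately. This is cleaner than routing through self-distributivity, so I would prefer this line: the fixed-point equation $j*i=j$ together with idempotency collapses to an equality of two images under the single left translation $L_j$, and latin-ness finishes it.

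The main obstacle is conceptual rather than computational: one must realize that the fixed-point relation $j*i=j$ should be paired with the \emph{idempotent} relation $j*j=j$ (not $i*i=i$) so that both sides share the left factor $j$, turning the comparison into an instance of the injectivity of one fixed left translation $L_j$. Once that pairing is spotted the proof is a one-line application of the definition of latin (every $L_j$ is a permutation, hence injective). I would conclude by noting explicitly that existence came from idempotency and uniqueness from latin-ness, so that together every $R_i$ has exactly one fixed point.
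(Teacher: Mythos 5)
Your proof is correct and is essentially the paper's own argument: both pair the fixed-point equation $j*i=j$ with idempotency $j*j=j$ to get $L_j(i)=L_j(j)$ and then invoke injectivity of $L_j$ (the paper phrases it contrapositively, concluding $L_j$ is not a permutation when $j\neq i$). No substantive difference.
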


\begin{proof}
Let $i\in X$ and notice that $R_i(i)=i$. Suppose that there is $i\neq j\in X$ such that $R_i(j)=j$. Then $L_j(i)=j*i=R_i(j)=j$ and $L_j(j)=j*j=j$ show that $L_j$ is not a permutation. Hence $(X,*)$ is not latin.
\end{proof}

The simple necessary condition of Lemma \ref{lem:almost} goes a long way towards identifying small connected quandles that are latin. Among the $791$ small connected quandles, there are $547$ quandles that are latin and only $4$ quandles that are not latin and whose right translations have a unique fixed point. These $4$ quandles all happen to be of order $28$ and their profile is either $(1, 3^9)$ or $(1, 3, 6^4)$. Importantly, there are latin quandles of order $28$ with profiles $(1, 3^9)$ and $(1, 3, 6^4)$. Hence, the profile alone cannot serve as both a sufficient and a necessary condition for identifying latin quandles among connected quandles.

Theorem \ref{thm:main} below is a sufficient condition on the profile for a quandle to be latin.

\section{Main results}\label{sec:proofs}

We start with the following results.

\begin{lemma}\label{lem:shift}
Let $f$ be a permutation with a cycle $C=(k,k+1,\dots,k+\ell)$. If $i,j\in C$ then $f^{j-i}(i)=j$.
\end{lemma}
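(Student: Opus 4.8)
The plan is to exploit the fact that the cycle $C=(k,k+1,\dots,k+\ell)$ is displayed using consecutive integers, so that within $C$ the permutation $f$ acts simply as ``increment the index by one, modulo the cycle length.'' Writing $i=k+a$ and $j=k+b$ with $0\le a,b\le\ell$, the displayed cycle notation records exactly that $f(k+s)=k+s+1$ for $0\le s<\ell$ and $f(k+\ell)=k$. Both rules are captured by the single formula $f(k+s)=k+\bigl((s+1)\bmod(\ell+1)\bigr)$, since the cycle $C$ has length $\ell+1$.

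First I would establish, by a one-line induction on $m$ (or by iterating the displayed rule), the closed form
\[
f^m(k+a)=k+\bigl((a+m)\bmod(\ell+1)\bigr),
\]
valid for every integer $m$. The base case $m=0$ is immediate, and the inductive step follows from applying $f$ once and using the compatibility of the successor map with reduction modulo $\ell+1$. Because $f$ is a permutation, this formula also holds for negative exponents, with $f^{-1}$ decrementing the index modulo $\ell+1$.

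To conclude, I would substitute $m=j-i=b-a$. This yields
\[
f^{j-i}(i)=f^{b-a}(k+a)=k+\bigl((a+(b-a))\bmod(\ell+1)\bigr)=k+\bigl(b\bmod(\ell+1)\bigr).
\]
Since $0\le b\le\ell$, the reduction $b\bmod(\ell+1)$ equals $b$, and hence $f^{j-i}(i)=k+b=j$, as claimed.

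The only point requiring care---and the main, if modest, obstacle---is the wraparound: when $j<i$ the exponent $j-i$ is negative, and one must be sure that the index arithmetic still lands on $j$ rather than on some shifted element. Reducing modulo the cycle length $\ell+1$ from the outset sidesteps this, since $(a+(b-a))\bmod(\ell+1)=b$ regardless of the sign of $b-a$, while the hypotheses $0\le a,b\le\ell$ guarantee that no spurious wraparound occurs in the final step. Thus the whole argument reduces to the observation that, along a cycle written with consecutive labels, a power of $f$ translates the label by exactly the exponent.
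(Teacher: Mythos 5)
Your proof is correct and rests on the same observation as the paper's: along a cycle labeled by consecutive integers, $f$ acts by incrementing the label, so $f^{m}$ translates by $m$ modulo the cycle length. The paper handles the negative-exponent case by a quick reduction ($f^{i-j}(j)=i$, hence $j=f^{-(i-j)}(i)$) rather than your uniform $\bmod(\ell+1)$ formula, but this is only a difference in presentation, not in substance.
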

\begin{proof}
This is clear when $j\ge i$. If $j<i$ then $f^{i-j}(j)=i$ and therefore $j = f^{-(i-j)}(i)$.
\end{proof}

\begin{lemma}\label{lem:division}
Let $(X,*)$ be an algebraic structure and $f$ an automorphism of $(X,*)$. The cycle decomposition of $f$ gives rise to a partition  ${\mathcal P}$ of $X$ (whose parts are the disjoint cycles of f). Suppose $x,y,z\in X$ are such that $x*y=z$,  $u\in C_u \in{\mathcal P}$, where $u\in \{x, y, z\}$. Then $\ell_z$ divides $\mathrm{lcm}(\ell_x, \ell_y)$, where $\ell_u=\lvert C_u\rvert$.
\end{lemma}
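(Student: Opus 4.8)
The plan is to exploit the defining property of an automorphism, namely that $f$ respects the product, in its iterated form: from $f(a*b)=f(a)*f(b)$ one gets by induction $f^n(a*b)=f^n(a)*f^n(b)$ for every $n\ge 0$, and the same holds for negative powers since $f^{-1}$ is again an automorphism. Applying this to the hypothesis $x*y=z$ yields the identity $f^n(z)=f^n(x)*f^n(y)$ for all integers $n$, which is the one relation coupling the three cycles $C_x$, $C_y$, $C_z$.

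Next I would set $L=\mathrm{lcm}(\ell_x,\ell_y)$ and use that, because the cycle of $f$ through a point has length equal to the order of that point under $f$, we have $f^{\ell_x}(x)=x$ and $f^{\ell_y}(y)=y$. Since $\ell_x\mid L$ and $\ell_y\mid L$, this gives $f^L(x)=x$ and $f^L(y)=y$. Substituting $n=L$ into the displayed identity then forces
\[
f^L(z)=f^L(x)*f^L(y)=x*y=z.
\]

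To finish I would invoke the standard fact that $\ell_z$, the length of the cycle of $f$ containing $z$, is precisely the least positive integer $m$ with $f^m(z)=z$, and that consequently $f^m(z)=z$ implies $\ell_z\mid m$. Since we have just shown $f^L(z)=z$, we conclude $\ell_z\mid L=\mathrm{lcm}(\ell_x,\ell_y)$, which is exactly the claim.

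I do not expect a genuine obstacle here; the argument is short once one notices that it is the automorphism property of $f$ that ties the three cycle lengths together. The only points requiring a little care are the passage to arbitrary powers of $f$ and the identification of the cycle length $\ell_u$ with the order of the point $u$ under $f$. Everything after the key identity $f^L(z)=z$ is elementary bookkeeping with divisibility.
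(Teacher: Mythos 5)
Your argument is correct and follows the paper's proof essentially verbatim: iterate the automorphism identity to get $f^n(z)=f^n(x)*f^n(y)$, evaluate at $n=\mathrm{lcm}(\ell_x,\ell_y)$ to conclude $f^n(z)=z$, and read off the divisibility. You merely make explicit the final step (that $f^m(z)=z$ forces $\ell_z\mid m$), which the paper leaves tacit.
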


\begin{proof}
If $x*y=z$ then $f(x)*f(y)=f(x*y)=f(z)$ and, by induction, $f^n(x)*f^n(y)=f^n(z)$, for every $n\geq0$. Let $m=\mathrm{lcm}(\ell_x, \ell_y)$. Then $f^m(x)=x$ and $f^m(y)=y$, so $f^m(z)=f^m(x)*f^m(y)=x*y=z$.
\end{proof}

See \cite[Lemma 3.6]{SVW} for a version of Lemma \ref{lem:division} for autotopisms of latin squares.

\begin{proposition}\label{prop:RL}
Let $(X,*)$ be a finite quandle with $1\in X$. Suppose that the cycles of the right translation $R_1$ have distinct lengths and that every right translation has a unique fixed point. Then the left translation $L_1$ is a permutation of $X$ and the elements of every cycle of $L_1$ are contained in some cycle of $R_1$.
\end{proposition}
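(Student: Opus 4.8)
The plan is to build everything on the single identity $R_1L_1=L_1R_1$. First observe that $R_1(1)=1*1=1$ by idempotency, so, since $R_1$ has a unique fixed point, the singleton $\{1\}$ is the unique cycle of $R_1$ of length $1$; in particular $\ell_1=1$, writing $\ell_u$ for the length of the $R_1$-cycle $C_u$ containing $u$. For the commuting relation I would compute directly, for every $j\in X$, that $R_1L_1(j)=(1*j)*1=(1*1)*(j*1)=1*(j*1)=L_1R_1(j)$, using right self-distributivity in the middle step and idempotency afterwards. Thus $L_1$ commutes with the permutation $R_1$; this is the engine of the whole argument.

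Next I would determine how $L_1$ interacts with the partition of $X$ into $R_1$-cycles. Applying Lemma \ref{lem:division} to the automorphism $R_1$ and the product $1*j=L_1(j)$ (so $x=1$, $y=j$, $z=L_1(j)$) gives $\ell_{L_1(j)}\mid \mathrm{lcm}(\ell_1,\ell_j)=\ell_j$, since $\ell_1=1$. Combined with commuting, for $j\in C_i$ one has $L_1(C_i)=\{R_1^k(L_1(j)):0\le k<\ell_i\}$, and because the length of the cycle of $L_1(j)$ divides $\ell_i$ this set is exactly that cycle. Hence $L_1$ maps each $R_1$-cycle $C_i$ onto an entire $R_1$-cycle $C_{\sigma(i)}$ with $\ell_{\sigma(i)}\mid\ell_i$. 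At this point the proposition reduces to the single claim $\ell_{\sigma(i)}=\ell_i$ for every $i$: since the cycle lengths of $R_1$ are pairwise distinct, $\ell_{\sigma(i)}=\ell_i$ forces $C_{\sigma(i)}=C_i$, whereupon $L_1$ restricts to a bijection of each $C_i$, which simultaneously shows that $L_1$ is a permutation and that each of its cycles lies inside a single $R_1$-cycle.

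The hard part is exactly this last claim, which is equivalent to the injectivity of $L_1$. If $\ell_{\sigma(i)}<\ell_i$ for some $i$, set $\ell'=\ell_{\sigma(i)}$ and pick any $p\in C_i$; iterating $R_1L_1=L_1R_1$ gives $L_1(R_1^{\ell'}(p))=R_1^{\ell'}(L_1(p))=L_1(p)$, because $L_1(p)\in C_{\sigma(i)}$ has period $\ell'$ under $R_1$. Since $R_1^{\ell'}(p)\ne p$ (its cycle has length $\ell_i>\ell'$), we obtain $p\ne q:=R_1^{\ell'}(p)$ with $1*p=1*q$, the shift being recorded concretely by Lemma \ref{lem:shift}. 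The goal is then to convert the collision $1*p=1*q$ into a right translation possessing two distinct fixed points, contradicting the hypothesis that every right translation has a unique fixed point. The natural tools are the self-distributive expansions $(1*p)*w=(1*w)*(p*w)$ and $(1*q)*w=(1*w)*(q*w)$, which, because $1*p=1*q$, force $(1*w)*(p*w)=(1*w)*(q*w)$ for every $w$; the task is to choose $w$ (for instance among $p$, $q$, or the common value $c:=1*p$) so that this equality exhibits a repeated fixed point of some $R_m$.

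I expect this fixed-point extraction to be the crux of the argument, and it is precisely where the distinct-length hypothesis must enter. The four connected quandles of order $28$ mentioned after Lemma \ref{lem:almost} have all right translations with a unique fixed point yet fail to be latin, so their profiles $(1,3^9)$ and $(1,3,6^4)$ support exactly the kind of collision ruled out above. Since those profiles have repeated lengths, any correct proof of the step $\ell_{\sigma(i)}=\ell_i$ must use distinctness in an essential way, presumably by exploiting that a shrinking cycle $C_i\to C_{\sigma(i)}$ has no second cycle of length $\ell_{\sigma(i)}$ available to absorb the excess preimages of $L_1$.
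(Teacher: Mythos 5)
Your reduction is sound and runs parallel to the paper's: the commutation $R_1L_1=L_1R_1$ (a correct consequence of self-distributivity and idempotency), together with Lemma \ref{lem:division}, shows that $L_1$ maps each $R_1$-cycle $C_i$ onto a full $R_1$-cycle whose length divides $\ell_i$, and the distinctness of the lengths then reduces the whole proposition to ruling out a strict shrink $\ell_{\sigma(i)}<\ell_i$. But that is exactly where your proof stops being a proof. You correctly extract a collision $1*p=1*q$ with $q=R_1^{\ell'}(p)\neq p$, and then you only describe a plan (``choose $w$ among $p$, $q$, or $c:=1*p$ and expand by self-distributivity'') for converting it into a second fixed point of some right translation. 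That plan does not go through as stated: taking $w=q$, say, gives $c*q=c*(p*q)$, and to conclude $q=p*q$ (which would force $p=q$, since $R_q$ is injective and fixes $q$) you would need the left translation $L_c$ to be injective --- which is precisely the kind of statement you are trying to prove. No single application of self-distributivity to the collision produces two fixed points of one and the same $R_m$; you acknowledge as much by calling this step the expected crux rather than carrying it out.

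The paper's actual argument for this step is considerably more structured than anything you sketch. It orders the cycles by increasing length and proves $1*C_t\subseteq C_t$ by induction on $t$; the induction hypothesis is needed to know that $R_y(1)=1*y$ lies in the same shorter cycle $C_u$ as $y:=1*x$, which permits defining $k=R_y(1)-y$ (via Lemma \ref{lem:shift}) and the automorphism $f=R_y^{-1}R_1^{k}R_x$ fixing $1$. The conjugation identities $f^{-1}R_1f=R_{f^{-1}(1)}=R_1$ and $R_1^{\ell_u}=R_yR_1^{\ell_u}R_y^{-1}$ (both from Lemma \ref{lem:automorphism}) then show that $z=R_1^{\ell_u}(x)\neq x$ is a second fixed point of $R_x$, contradicting the unique-fixed-point hypothesis. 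Your write-up contains neither the induction nor any substitute for this conjugation computation, so the central step of the proposition is missing.
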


\begin{proof}
Let $X=\{1,\dots,n\}$. Suppose that $R_1$ has $c$ cycles of lengths $1=\ell_1<\ell_2<\cdots<\ell_c$. Let $a_0=0$ and $a_i=a_{i-1}+\ell_i$. By renaming the elements of $X$, we can assume without loss of generality that $R_1=(a_0+1,\dots,a_1)(a_1+1,\dots,a_2)\cdots(a_{c-1}+1,\dots,a_c)$. For $1\leq t\leq c$, let $C_t=\{a_{t-1}+1,\dots,a_t\}$ be the underlying set of the $t$-th cycle of $R_1$.

We will first prove that $1*C_t=\{1*x:x\in C_t\}\subseteq C_t$, for every $t$. For $t=1$, we have that $C_1=\{1\}$ and therefore $1*C_1=\{1*1\}=\{1\}\subseteq C_1$. Now, let $t>1$ and suppose that $1*C_s\subseteq C_s$ holds for all $s<t\leq c$. We will prove that $1*C_t\subseteq C_t$. Let $x\in C_t$ and $y\in C_u$ be such that $1*x=y$. By Lemma \ref{lem:division}, $\ell_u$ divides $\mathrm{lcm}(\ell_1,\ell_t)=\mathrm{lcm}(1,\ell_t)=\ell_t$, so $u\in\{1,\dots,t\}$. If $u=t$, we are done. Hence, suppose that $u\in\{1,\dots,t-1\}$. We will show that $z=R_1^{\ell_u}(x)$ is a fixed point of $R_x$. Noting that $z\neq x$ (as $x\in C_t$ and $\lvert C_t\rvert=\ell_t>\ell_u$), this will conflict with $x$ being the unique fixed point of $R_x$.

Note that $R_y(1)\in C_u$ by the induction assumption. Let $k=R_y(1)-y$ and consider the automorphism
\begin{displaymath}
    f=R_y^{-1}R_1^{k}R_x.
\end{displaymath}
Since $R_y(1)\in C_u$ and $y\in C_u$, Lemma \ref{lem:shift} yields $R_1^{k}(y)=R_y(1)$ and therefore
\begin{displaymath}
    f(1)=R_y^{-1}R_1^{k}R_x(1)=R_y^{-1}R_1^{k}(y)=R_y^{-1}(R_y(1))=1.
\end{displaymath}
Also note, using Lemma \ref{lem:automorphism}, that $R_1^{\ell_u}R_y R_1^{-\ell_u}=R_{R_1^{\ell_u}(y)}=R_y$ (since $y\in C_u$ and $\lvert C_u\rvert=\ell_u$), so $R_1^{\ell_u}=R_y R_1^{\ell_u}R_y^{-1}$. Then we have
\begin{align*}
    R_x^{-1}(z) &=R_x^{-1}R_1^{\ell_u}(x)=R_x^{-1}R_1^{\ell_u}R_x(x)\\
        &=R_x^{-1}R_1^{-k}R_1^{\ell_u}R_1^{k}R_x(x)\\
        &=R_x^{-1}R_1^{-k}R_y R_1^{\ell_u}R_y^{-1}R_1^{k}R_x(x)\\
        &=f^{-1}R_1^{\ell_u}f(x) =(f^{-1}R_1 f)^{\ell_u}(x) = R_{f^{-1}(1)}^{\ell_u}(x)=R_1^{\ell_u}(x)=z.
\end{align*}

We are now ready to prove that $L_1$ is surjective, hence bijective. Let $x\in X$, say $x\in C_t=\{a_{t-1}+1,\dots,a_t\}$. By the previous paragraph, there is $y\in C_t$ such that $1*a_t=y$. Since both $x$ and $y$ belong to $C_t$, there is $1\leq m\leq \ell_t$ such that $x=R_1^m(y)$. Then we have $L_1(a_{t-1}+m)=1*(a_{t-1}+m)=R_1^m(1)*R_1^m(a_t)=R_1^m(1*a_t)=R_1^m(y)=x$.

Finally, since $1*C_t\subseteq C_t$ for every underlying set $C_t$ of a cycle of $R_1$, it follows that the elements of every cycle of $L_1$ are contained in some cycle of $R_1$.
\end{proof}

\begin{theorem}\label{thm:main}
Let $(X,*)$ be a finite quandle such that each right translation of $(X,*)$ has cycles of distinct lengths. Then $(X,*)$ is a latin quandle.
\end{theorem}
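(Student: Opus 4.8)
The plan is to deduce the theorem from Proposition \ref{prop:RL}, which already does the real work: it shows that a single left translation $L_i$ is a permutation provided that (a) the cycles of $R_i$ have distinct lengths and (b) every right translation of the quandle has a unique fixed point. Hypothesis (a) is handed to us directly by the assumption of the theorem, and in fact for every $i\in X$. So the only thing left to secure is the global condition (b).

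The key observation I would make is that the distinct-cycle-lengths assumption already forces condition (b). Indeed, a fixed point of $R_i$ is exactly a cycle of length $1$ in the disjoint cycle decomposition of $R_i$. If $R_i$ had two distinct fixed points, these would give two distinct $1$-cycles, i.e., two cycles of the same length $1$, contradicting the assumption that the cycles of $R_i$ have distinct lengths. Hence each $R_i$ has at most one fixed point. On the other hand, idempotency gives $R_i(i)=i*i=i$, so $i$ is always a fixed point of $R_i$. Combining the two, each $R_i$ has exactly one fixed point, which is precisely condition (b).

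With both hypotheses of Proposition \ref{prop:RL} in hand, I would fix an arbitrary $i\in X$ and apply the proposition with $i$ playing the role of the distinguished element ``$1$'', concluding that $L_i$ is a permutation of $X$. Since $i$ was arbitrary, every left translation is a permutation, and therefore $(X,*)$ is latin by definition.

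There is essentially no hard step remaining once Proposition \ref{prop:RL} is available: the genuine content is carried by that proposition, and the theorem merely adds the remark that the uniformly distinct cycle lengths automatically supply the ``unique fixed point'' hypothesis. The one point I would state with care is that condition (b) is a \emph{global} requirement—it must hold for all right translations simultaneously—which is why I would first establish it for every $R_i$ before invoking the proposition element by element; this is exactly the place where the theorem's hypothesis on \emph{all} right translations (rather than on a single one) is used.
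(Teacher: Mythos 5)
Your proof is correct and follows exactly the paper's own argument: distinct cycle lengths plus the idempotency-forced fixed point $R_i(i)=i$ yield a unique fixed point for every right translation, and then Proposition \ref{prop:RL} applied to each $i\in X$ shows every $L_i$ is a permutation. No differences worth noting.
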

\begin{proof}
Since each right translation has cycles of distinct lengths and it has a fixed point, it has precisely one fixed point. Given any right translation $R_i$ of $(X,*)$, Proposition \ref{prop:RL} then shows that the corresponding left translation $L_i$ is a permutation of $X$. Hence $(X,*)$ is a latin quandle.
\end{proof}

As an immediate corollary of Theorem \ref{thm:main}, we obtain:

\begin{corollary}\label{cor:connected}
Let $(X,*)$ be a finite connected quandle whose profile $\mathrm{p}(X,*)$ has no repeats. Then $(X,*)$ is a latin quandle.
\end{corollary}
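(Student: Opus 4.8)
The plan is to reduce Corollary \ref{cor:connected} directly to Theorem \ref{thm:main} by translating the hypothesis on the profile into the per-translation hypothesis that the theorem requires. First I would invoke Lemma \ref{lem:connected}: since $(X,*)$ is a finite connected quandle, all of its right translations share a single cycle structure, and this common cycle structure is by definition the profile $\mathrm{p}(X,*)$. Hence the assumption that $\mathrm{p}(X,*)$ has no repeats is precisely the statement that this common cycle structure has no two equal cycle lengths.

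Next I would observe that ``no repeats in the common cycle structure'' is exactly what Theorem \ref{thm:main} asks for. Because every right translation $R_i$ has the same cycle structure as $\mathrm{p}(X,*)$, and that structure has all distinct cycle lengths, each individual $R_i$ has cycles of distinct lengths. This verifies the hypothesis of Theorem \ref{thm:main} for $(X,*)$.

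Finally, applying Theorem \ref{thm:main} yields that $(X,*)$ is latin, which completes the argument.

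I expect no genuine obstacle: the mathematical content lives entirely in Theorem \ref{thm:main} and Lemma \ref{lem:connected}, and the corollary is a bookkeeping step matching a profile-level condition to a translation-level condition. The only point deserving a moment's care is that Theorem \ref{thm:main} demands the distinct-length property for \emph{every} right translation, not merely one; connectedness, through Lemma \ref{lem:connected}, is exactly what upgrades the single-profile assumption to this uniform condition.
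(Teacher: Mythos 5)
Your argument is correct and is exactly the reduction the paper intends: the authors state Corollary \ref{cor:connected} as an ``immediate'' consequence of Theorem \ref{thm:main}, with the implicit step being precisely your use of Lemma \ref{lem:connected} to upgrade the no-repeats condition on the profile to the distinct-cycle-lengths condition on every right translation. No issues.
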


\begin{example}
The quandle $Q_{9,4}$, whose quandle table is given in Table \ref{table:3}, is a connected quandle of order $9$ with profile $(1,2,6)$. Since $1<2<6$, we conclude, by Theorem \ref{thm:main}, that this quandle is latin, as we can easily check by inspecting Table \ref{table:3}.
\end{example}

\begin{table}[htb]
\caption{Quandle table for $Q_{9,4}$.}\label{table:3}
\begin{center}
\begin{tabular}[htb]{c|c c c c c c c c c}
 $\ast$ & 1 & 2 & 3 & 4 & 5 & 6 & 7 & 8 & 9\\
 \hline
 1 & 1 & 3 & 2 & 9 & 8 & 7 & 6 & 5 & 4\\
 2 & 3 & 2 & 1 & 8 & 7 & 9 & 5 & 4 & 6\\
 3 & 2 & 1 & 3 & 7 & 9 & 8 & 4 & 6 & 5\\
 4 & 7 & 9 & 8 & 4 & 6 & 5 & 1 & 3 & 2\\
 5 & 9 & 8 & 7 & 6 & 5 & 4 & 3 & 2 & 1\\
 6 & 8 & 7 & 9 & 5 & 4 & 6 & 2 & 1 & 3\\
 7 & 5 & 4 & 6 & 2 & 1 & 3 & 7 & 9 & 8\\
 8 & 4 & 6 & 5 & 1 & 3 & 2 & 9 & 8 & 7\\
 9 & 6 & 5 & 4 & 3 & 2 & 1 & 8 & 7 & 9\\
\end{tabular}
\end{center}
\end{table}

\begin{example}
By Theorem \ref{thm:main}, every connected quandle with profile of the form $(1,\ell)$, where $1<\ell$, is latin. These are the so-called \emph{quandles of cyclic type}, see \cite{Lopes_Roseman}.
\end{example}

\begin{remark}
(i) Note that the conclusion of Proposition \ref{prop:RL} cannot be strengthened to assert that the elements of every cycle of $L_1$ coincide with the elements of some cycle of $R_1$. For instance, in the quandle $Q_{9,4}$ we have $R_1=(1)(2,3)(4,7,5,9,6,8)$ and $L_1=(1)(2,3)(4,9)(5,8)(6,7)$.

(ii) The assumption that all right translations have a unique fixed point is necessary in Proposition \ref{prop:RL}. In the quandle with quandle table
\begin{center}
\begin{tabular}{c|ccc}
 $\ast$&1&2&3\\
 \hline
 1&1&1&1\\
 2&3&2&2\\
 3&2&3&3\\
\end{tabular}
\end{center}
the right translation $R_1=(1)(2,3)$ has cycles of distinct lengths but the left translation $L_1$ is not a permutation.

(iii) We are not aware of a quandle that satisfies the assumptions of Proposition \ref{prop:RL} but is not connected.
\end{remark}

See the appendix for profiles of small connected quandles in relation to Theorem \ref{thm:main}.

\section{Further research}\label{sctn:further}

The connected quandles of order 28 demonstrate that we cannot identify latin quandles among connected quandles by their profile alone. Among the $791$ small connected quandles, there are $547$ latin quandles, of which $183$ have cycles of distinct lengths and $364$ do not. Can the sufficient condition of Theorem \ref{thm:main} be generalized to capture more latin quandles among connected quandles? Is there a criterion for characterizing latin quandles among connected quandles based on the profile and other combinatorial properties?

A conjecture of Hayashi \cite{Hayashi} states that every right translation of a finite connected quandle has a regular cycle, that is, it is a permutation whose order is equal to the length of its longest cycle. Is there a connection between Hayashi's conjecture and Theorem \ref{thm:main}? In particular, can Theorem \ref{thm:main} be generalized under the assumption that Hayashi's conjecture is true?

\section*{Acknowledgement}

Corollary \ref{cor:connected} was initially obtained by the first two authors. Their proof was later simplified and generalized by the third author to yield Theorem \ref{thm:main}.

\section{Appendix}

\emph{In this appendix we omit the unique fixed point from all profiles.} Table \ref{table:5} lists all small connected quandles whose profile contains no repeats---these are the small quandles covered by Theorem \ref{thm:main}. Table \ref{table:6} lists profiles of all small latin quandles whose profiles contain repeats---these are the profiles of small latin quandles not covered by Theorem \ref{thm:main}. Finally, there are precisely two profiles of small non-latin connected quandles that have a unique fixed point, namely $(3^9)$ and $(3,6^4)$. Both of these profiles also appear in Table \ref{table:6}.

\begin{table}[htb]
\caption{Small connected quandles $Q_{n,m}$ whose profile contains no repeats. The unique fixed point is omitted from all profiles.}\label{table:5}
\begin{small}
\begin{tabular}{@{}lll@{\hskip 4mm}|@{\hskip 4mm}lll@{}}
\toprule
$n$ &$m$                            &$\mathrm{p}(Q_{n,m})$               &$n$  &$m$                                  &$\mathrm{p}(Q_{n,m})$\\
\midrule
$1$ & $1$                           & $()$                               &$24$ & $24,25$                             & $(2,7,14)$ \\
$3$ & $1$                           & $(2)$                              &$25$ & $21$--$30$                          & $(4,20)$ \\
$4$ & $1$                           & $(3)$                              &     & $31$--$34$                          & $(24)$ \\
$5$ & $2,3$                         & $(4)$                              &$27$ & $37$--$40,47$--$52,60,61$           & $(2,6,18)$  \\
$7$ & $4,5$                         & $(6)$                              &     & $62$--$65$                          & $(26)$ \\
$8$ & $2,3$                         & $(7)$                              &$29$ & $1,2,7,9,10,13,14,17,18,20,25,26$   & $(28)$ \\
$9$ & $4$--$6$                      & $(2,6)$                            &$31$ & $2,10$--$12,16,20,21,23$            & $(30)$ \\
    & $7,8$                         & $(8)$                              &$32$ & $10$--$15$                          & $(31)$ \\
$11$& $6$--$9$                      & $(10)$                             &     & $16,17$                             & $(3,7,21)$ \\
$12$& $4$                           & $(2,3,6)$                          &$36$ & $69,70$                             & $(3,8,24)$  \\
$13$& $1,5,6,10$                    & $(12)$                             &$37$ & $24$--$35$                          & $(36)$ \\
$16$& $8,9$                         & $(15)$                             &$40$ & $29$--$32$                          & $(4,7,28)$ \\
$17$& $2,4$--$6,9$--$11,13$         & $(16)$                             &$41$ & $24$--$39$                          & $(40)$ \\
$19$& $1,2,9,12$--$14$              & $(18)$                             &$43$ & $30$--$41$                          & $(42)$ \\
$20$& $7,8$                         & $(3,4,12)$                         &$44$ & $6$--$9$                            & $(3,10,30)$  \\
$23$& $4,6,9,10,13,14,16,18$--$20$  & $(22)$                             &$47$ & $24$--$45$                          & $(46)$ \\
\bottomrule
\end{tabular}
\end{small}
\end{table}

\begin{table}[htb]
\caption{Profiles of small latin quandles that contain repeats. The unique fixed point is omitted from all profiles.}\label{table:6}
\begin{small}
\begin{tabular}{@{}l@{\hskip 8mm}llllllll@{}}
\toprule
$n$ &profiles\\
\midrule
$5$ &$(2^2)$\\
$7$ &$(2^3)$         & $(3^2)$\\
$9$ &$(2^4)$         & $(4^2)$\\
$11$&$(2^5)$         & $(5^2)$\\
$13$&$(2^6)$         & $(3^4)$      & $(4^3)$         & $(6^2)$\\
$15$&$(2^7)$         & $(2,4^3)$    & $(2^2,10)$\\
$16$&$(3^5)$         & $(5^3)$      & $(3,6^2)$\\
$17$&$(2^8)$         & $(4^4)$      & $(8^2)$\\
$19$&$(2^9)$         & $(3^6)$      & $(6^3)$         & $(9^2)$\\
$20$&$(2^2,3,6^2)$\\
$21$&$(2^{10})$      & $(2,6^3)$    & $(2,3^2,6^2)$   & $(2^3,14)$\\
$23$&$(2^{11})$      & $(11^2)$\\
$25$&$(2^{12})$      & $(3^8)$      & $(4^6)$         & $(2^2,4^5)$     & $(6^4)$      & $(8^3)$     & $(2^2,10^2)$    & $(12^2)$\\
$27$&$(2^{13})$      & $(2,4^6)$    & $(2^4,6^3)$     & $(2,6^4)$       & $(2,8^3)$    & $(13^2)$    & $(2^4,18)$\\
$28$&$(3^9)$         & $(3,6^4)$    & $(2^3,3,6^3)$\\
$29$&$(2^{14})$      & $(4^7)$      & $(7^4)$         & $(14^2)$\\
$31$&$(2^{15})$      & $(3^{10})$   & $(5^6)$         & $(6^5)$         & $(10^3)$     & $(15^2)$\\
$33$&$(2^{16})$      & $(2^5,22)$   & $(2,5^2,10^2)$  & $(2,10^3)$\\
$35$&$(3^2,4,12^2)$  & $(4,6,12^2)$ & $(2^2,3^2,6^4)$ & $(2^2,6^5)$     & $(2^3,4^7)$  & $(2^{17})$\\
$36$&$(3,4^2,12^2)$  & $(2^4,3,6^4)$& $(2,3,6^5)$,\\
$37$&$(2^{18})$      & $(3^{12})$   & $(4^9)$         & $(6^6)$         & $(9^4)$      & $(12^3)$    & $(18^2)$\\
$39$&$(2^{19})$      & $(2^6,26)$   & $(2,12^3)$      & $(2,6^6)$       & $(2,4^9)$    & $(2,3^4,6^4)$\\
$40$&$(2^2,7,14^2)$\\
$41$&$(2^{20})$      & $(4^{10})$   & $(5^8)$         & $(8^5)$         & $(10^4)$     & $(20^2)$\\
$43$&$(2^{21})$      & $(3^{14})$   & $(6^7)$         & $(7^6)$         & $(14^3)$     & $(21^2)$\\
$44$&$(3,5^2,15^2)$  & $(2^5,3,6^5)$\\
$45$&$(4,8^5)$       & $(2^2,8^5)$  & $(4^{11})$      & $(2^2,4^{10})$  & $(2^{22})$   & $(2^4,6^6)$ & $(2^4,4^9)$     & $(2^7,6^5)$\\
    & $(2^7,30)$      & $(2^7,10^3)$    & $(2,4^3,6,12^2)$\\
$47$&$(2^{23})$      & $(23^2)$\\
\bottomrule
\end{tabular}
\end{small}
\end{table}

\end{document}